\newtheorem{proposition}{Proposition}
\newtheorem{theorem}{Theorem}
\newtheorem{definition}{Definition}
\title{\LARGE \bf
Time-Optimal Adiabatic-Like Expansion of Bose-Einstein Condensates
}
\author{Dionisis Stefanatos and Jr-Shin Li
\thanks{This work was supported by the Air Force Office of Scientific Research under Young Investigator Award FA9550-10-1-0146}
\thanks{D. Stefanatos and J.-S. Li are with the Department of Electrical and Systems Engineering, Washington University,
        St. Louis, MO 63130, USA
        {\tt\small dionisis@seas.wustl.edu, jsli@seas.wustl.edu}}%
}
\begin{document}

\maketitle

\begin{abstract}

In this paper we study the fast adiabatic-like expansion of a one-dimensional Bose-Einstein condensate (BEC) confined in a harmonic potential, using the theory of time-optimal control. We find that under reasonable assumptions suggested by the experimental setup, the minimum-time expansion occurs when the frequency of the potential changes in a bang-bang form between the permitted values. We calculate the necessary expansion time and show that it scales logarithmically with large values of the expansion factor. This work is expected to find applications in areas where the efficient manipulations of BEC is of utmost importance. As an example we present the field of atom interferometry with BEC, where the wavelike properties of atoms are used to perform interference experiments that measure with unprecedented precision small shifts induced by phenomena like rotation, acceleration, and gravity gradients.

\end{abstract}

\begin{IEEEkeywords}
Quantum control, time-optimal control, Bose-Einstein condensate
\end{IEEEkeywords}

\IEEEpeerreviewmaketitle

\section{INTRODUCTION}

A Bose-Einstein condensate (BEC) is the state of matter emerging in a dilute gas of weakly interacting bosonic atoms confined in an external potential when cooled to temperatures very near to absolute zero. More specifically, below a critical temperature a large fraction of the bosons occupies the lowest quantum state of the external potential, and the quantum effects become apparent on a macroscopic scale. This peculiar state of matter was first predicted by Einstein, when he extended the statistics of light quanta (photons), developed by Bose \cite{Bose24}, to material particles \cite{Einstein25}.

Since its first experimental demonstration in 1995 \cite{Cornell95, Ketterle95}, BEC has become a workhorse for atomic physics experiments. One premier example is the use of BEC for precision measurements in the context of atom interferometry \cite{Cronin09,Schmiedmayer11}, where the wavelike properties of atoms are exploited to perform interference experiments that measure small shifts induced by phenomena like rotation, acceleration, and gravity gradients \cite{Kasevich11}. For this kind of applications, the necessity to control and manipulate the BEC is ubiquitous.

Among the various necessary control steps, a very important task is the ability to expand the BEC without exciting higher states, an undesirable effect that generates friction and heating \cite{Rezek06}. The conventional way to achieve this goal is to change adiabatically the potential that confines the BEC. The drawback of this
approach is the long necessary times which may render it impractical. Recently, a method to bypass this problem has been proposed \cite{Muga09} and implemented experimentally \cite{Schaff_EPL11}. The idea is to change the trapping potential in a way that prepares the same
final state as the adiabatic process at a given final time. This method, neatly characterized as ``shortcut to adiabaticity" \cite{Chen10}, provides a family of paths which achieve the desired frictionless expansion, and in theory the necessary time can be made arbitrarily small. In practice, there are always experimental constraints which limit this time to a finite value.

In this article, we impose restrictions motivated by experiments and formulate the desired transfer as a time-optimal problem. We then use optimal control theory to
find the shortest adiabatic-like path for a one-dimensional BEC, trapped in a time-dependent harmonic potential. We show that the minimum time frictionless expansion takes place when the frequency of the potential changes in a bang-bang manner between the allowed values and calculate the necessary expansion time. Note that numerical optimization methods have been used to control a BEC in an optical lattice \cite{Sklarz02} or in a magnetic microtrap \cite{Hohenester07}. Our approach here is different, since we use the time-optimal theory of single-input control systems in the plane \cite{Su1,Boscain04}. The analysis continuous our previous work \cite{Stefanatos10,Stefanatos11}, where we have considered the minimum time frictionless cooling of a harmonically trapped atom.

\section{FORMULATION OF THE PROBLEM IN TERMS OF OPTIMAL CONTROL}

The evolution of the wavefunction $\psi(t,x)$ of a condensate trapped
in a one-dimensional (elongated cigar trap) parabolic potential with time-varying frequency
$\omega(t)$ is given by the following Gross-Pitaevskii equation \cite{Muga09}
\begin{equation}
\label{Schrodinger}i\hbar\frac{\partial\psi}{\partial t}=\left[  -\frac
{\hbar^{2}}{2m}\frac{\partial^{2}}{\partial x^{2}}+\frac{m\omega^{2}%
(t)}{2}x^{2}+g|\psi|^2\right]  \psi,
\end{equation}
where $m$ is the particle mass, $g$ is the coupling constant and $\hbar$ is Planck's constant; $x\in\mathbb{R}$ and $\psi$ is a square-integrable function on the real line. When
$\omega(t)$ is \emph{constant} and $g|\psi|^2/(\hbar\omega)\gg 1$ the kinetic energy term may be neglected \cite{Baym96}, the so-called Thomas-Fermi approximation,
\begin{equation}
\label{TF1}i\hbar\frac{\partial\psi}{\partial t}=\left[  \frac{m\omega^{2}}{2}x^{2}+g|\psi|^2\right]  \psi.
\end{equation}
The above equation can be solved by separation
of variables and the solution is
\begin{equation}
\label{solution_constant}\psi(t,x)=e^{-i\mu_{\omega
}t/\hbar}\,\Psi_{\omega}(x),
\end{equation}
where
\begin{equation}
\label{eigenstate}\Psi_{\omega}(x)=\sqrt{\frac{\mu_\omega-m\omega^2x^2/2}{g}},\quad |x|\leq\sqrt{\frac{2\mu_\omega}{m\omega^2}},
\end{equation}
and the chemical potential $\mu_\omega$ is determined from the number of bosons $N$ through the normalization condition
\begin{equation}
\int|\psi(t,x)|^2dx=\int|\Psi_{\omega}(x)|^2dx=N.
\end{equation}
We find easily that
\begin{equation}
\label{energy}\mu_{\omega}=\left(\frac{9}{32}m\omega^2g^2N^2\right)^{1/3}.
\end{equation}

\begin{figure}[t]
\centering
\includegraphics[width=0.9\linewidth]{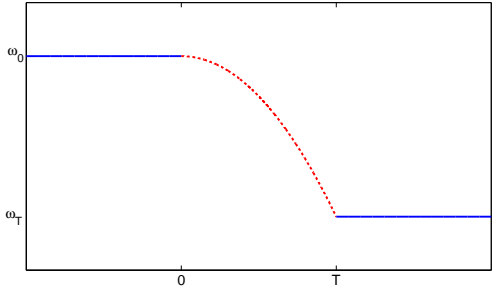}\caption{Time evolution of the
harmonic trap frequency.}%
\label{fig:frequency}%
\end{figure}


Consider now the case shown in Fig.\ \ref{fig:frequency}, where $\omega
(t)=\omega_{0}$ for $t\leq0$ and $\omega(t)=\omega_{T}<\omega_{0}$ for $t\geq
T$. For frictionless evolution, the path $\omega(t)$ between these two values should be
chosen such that if
\begin{equation}
\label{initial_condition}\psi(0,x)=\Psi_{\omega
_{0}}(x),\nonumber
\end{equation}
then
\begin{equation}
\label{frictionless_cooling}\psi(t,x)=e^{ia(t)}\Psi_{\omega_{T}%
}(x),\quad t\geq T,
\end{equation}
where $\alpha(t)$ is a global (independent of the spatial
coordinate $x$) phase factor. According to (\ref{eigenstate}) and (\ref{energy}), this evolution corresponds to the expansion of the condensate by the factor $(\omega_{0}/\omega_{T})^{2/3}$, see Fig. \ref{fig:expansion}.
Among all the paths $\omega(t)$ that result in (\ref{frictionless_cooling}),
we would like to find one that achieves frictionless expansion in minimum time
$T$. In the following we provide a sufficient condition on $\omega(t)$ for
frictionless expansion and we use it to formulate the corresponding time-optimal
control problem.

\begin{proposition}
\label{prop:fr_cooling} If $\omega(t)$, with $\omega(0)=\omega_{0}$ and
$\omega(t)=\omega(T)=\omega_{T}$ for $t\geq T$, is such that the
equation
\begin{equation}
\label{Ermakov}\ddot{b}(t)+\omega^{2}(t)b(t)=\frac{\omega_{0}^{2}}{b^{2}(t)}%
\end{equation}
has a solution $b(t)$, with $b(0)=1,\dot{b}(0)=0$ and $b(t)=b(T)=(\omega
_{0}/\omega_{T})^{2/3}$, $t\geq T$, then condition (\ref{frictionless_cooling})
for frictionless expansion is satisfied.
\end{proposition}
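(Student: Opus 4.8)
The plan is to construct, from the solution $b(t)$ of (\ref{Ermakov}) whose existence is assumed, an explicit self-similar solution of the condensate dynamics in the Thomas--Fermi regime that has initial datum $\Psi_{\omega_0}$ and, for $t\ge T$, equals $\Psi_{\omega_T}$ up to an $x$-independent phase. Since the Cauchy problem for the Gross--Pitaevskii equation has a unique solution, this self-similar function must coincide with the actual $\psi(t,x)$, and then (\ref{frictionless_cooling}) follows with $a(t)$ read off from the construction. The candidate is the scaling ansatz
\[
\psi(t,x)=\frac{1}{\sqrt{b(t)}}\,\Psi_{\omega_0}\!\left(\frac{x}{b(t)}\right)\exp\!\left[\frac{i\,m\,\dot{b}(t)}{2\hbar\,b(t)}\,x^{2}+i\,a(t)\right],
\]
in which the prefactor $b^{-1/2}$ makes $|\psi|^{2}$ a pure dilation of $|\Psi_{\omega_0}|^{2}$ with the one-dimensional norm $N$ preserved, the quadratic phase encodes the scaling flow $v=(\hbar/m)\,\partial_{x}S=\dot{b}\,x/b$, and $a(t)$ is the global phase to be determined.

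To check that this is a solution I would pass to the Madelung form $\psi=\sqrt{n}\,e^{iS}$, in which (\ref{Schrodinger}) is equivalent to the continuity equation $\partial_{t}n+\partial_{x}\!\big(n\,\hbar\partial_{x}S/m\big)=0$ together with the Hamilton--Jacobi equation $-\hbar\partial_{t}S=\frac{\hbar^{2}}{2m}(\partial_{x}S)^{2}+\frac{m\omega^{2}(t)}{2}x^{2}+g\,n-\frac{\hbar^{2}}{2m}(\partial_{x}^{2}\sqrt{n})/\sqrt{n}$; the Thomas--Fermi approximation of this section consists in dropping the last (quantum-pressure) term while keeping the hydrodynamic flow. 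With $n=b^{-1}\Psi_{\omega_0}^{2}(x/b)$ and $S=\frac{m\dot{b}}{2\hbar b}x^{2}+a(t)$, the continuity equation is satisfied identically, and, using $g\,\Psi_{\omega_0}^{2}(y)=\mu_{\omega_0}-\frac{1}{2}m\omega_0^{2}y^{2}$ from (\ref{eigenstate}), the Hamilton--Jacobi equation separates into an $x^{2}$ part and an $x$-independent part. After the $\dot{b}^{2}/b^{2}$ terms cancel, the $x^{2}$ part reduces exactly to $\ddot{b}+\omega^{2}(t)b=\omega_0^{2}/b^{2}$, i.e.\ (\ref{Ermakov}); the constant part gives $\dot{a}(t)=-\mu_{\omega_0}/(\hbar b(t))$, so $a(t)=-(\mu_{\omega_0}/\hbar)\int_{0}^{t}dt'/b(t')$.

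The boundary conditions then finish the proof. From $b(0)=1$ and $\dot{b}(0)=0$ the ansatz gives $\psi(0,x)=\Psi_{\omega_0}(x)$, matching (\ref{initial_condition}). For $t\ge T$ one has $b(t)\equiv(\omega_0/\omega_T)^{2/3}$, hence $\dot{b}=0$ and the quadratic phase vanishes, leaving $\psi(t,x)=e^{ia(t)}\,b^{-1/2}\Psi_{\omega_0}(x/b)$. It remains to verify $b^{-1}\Psi_{\omega_0}^{2}(x/b)=\Psi_{\omega_T}^{2}(x)$ with the same support interval: by (\ref{eigenstate}) this needs $\omega_0^{2}/b^{3}=\omega_T^{2}$, which holds by the choice of $b$, and $\mu_{\omega_0}/b=\mu_{\omega_T}$, which follows from (\ref{energy}) since $\mu_{\omega}\propto\omega^{2/3}$ implies $\mu_{\omega_T}/\mu_{\omega_0}=(\omega_T/\omega_0)^{2/3}=1/b$. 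Thus $\psi(t,x)=e^{ia(t)}\Psi_{\omega_T}(x)$ for $t\ge T$, which is (\ref{frictionless_cooling}).

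The step I expect to carry the real content, as opposed to routine bookkeeping, is the assertion that the single scalar condition (\ref{Ermakov}) on $b(t)$ is enough to kill all the $x$-dependence produced by the substitution. This succeeds only because, in the Thomas--Fermi regime, both the trap term and the interaction term are quadratic in $x$ on the scaled profile, so the Hamilton--Jacobi equation possesses merely an $x^{2}$ sector and a constant sector; the exponent $b^{-2}$ on the right-hand side of (\ref{Ermakov}) --- in contrast with the $b^{-3}$ that appears for the non-interacting harmonic oscillator --- is precisely what the cubic nonlinearity forces, and one must confirm that imposing the $x^{2}$-sector identity leaves the global-phase equation consistent and generates no further constraint on $b$.
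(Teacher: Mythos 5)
Your proof is correct and rests on the same scaling ansatz and the same role of (\ref{Ermakov}) as the paper's, but the verification proceeds along a genuinely different route. The paper substitutes the ansatz into the full Gross--Pitaevskii equation (\ref{Schrodinger}), uses (\ref{Ermakov}) together with the time rescaling (\ref{time_rescaling}) to reduce it to the constant-frequency scaled equation (\ref{scaled}), and only then invokes the Thomas--Fermi approximation to solve for the auxiliary wavefunction $\phi$; you instead pass to Madelung variables, where the Thomas--Fermi step is the (equivalent) dropping of the quantum-pressure term, and you verify continuity and the Hamilton--Jacobi equation directly for the fully explicit self-similar candidate. The two approximations are the same, and your $x^{2}$-sector computation correctly reproduces the $\omega_0^2/b^2$ right-hand side of (\ref{Ermakov}) (which, as you note, is where the cubic nonlinearity differs from the linear oscillator's $b^{-3}$), while your constant sector gives $a(t)=-(\mu_{\omega_0}/\hbar)\int_0^t dt'/b$, matching the paper's phase $-\mu_{\omega_0}\tau(t)/\hbar$. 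What the paper's route buys is that it does not presuppose the scaled dynamics is stationary --- it derives $\phi(\tau,\chi)=e^{-i\mu_{\omega_0}\tau/\hbar}\Psi_{\omega_0}(\chi)$ from the scaled equation --- whereas your route buys explicitness: the global phase and the scaling identities $\omega_0^2/b^3=\omega_T^2$ and $\mu_{\omega_0}/b=\mu_{\omega_T}$ (equivalent to the paper's identity relating $\Psi_{\omega_0}$ and $\Psi_{\omega_T}$ via (\ref{eigenstate}) and (\ref{energy})) are checked transparently, at the cost of an appeal to uniqueness for the Cauchy problem that the paper's constructive substitution does not need. Both arguments share the same caveat, which you could state more explicitly as the paper does: the Thomas--Fermi (quantum-pressure) neglect is only justified so long as $b$ does not become small.
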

\begin{figure}[t]
\centering
\includegraphics[width=0.7\linewidth]{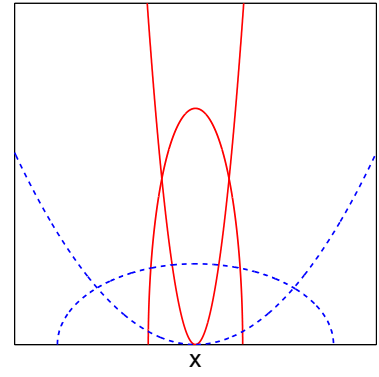}\caption{Schematic representation of the condensate expansion. With solid line we represent the initial harmonic potential (convex) and the corresponding wavefunction (concave), while with dashed line we depict the final potential (convex) and the expanded wavefunction (concave).}%
\label{fig:expansion}%
\end{figure}
\begin{proof}
The frequency variations in the trapping potential change the time and
distance scales and motivate the use of the following ``ansatz" \cite{Kagan96, Muga09}, in
(\ref{Schrodinger})
\begin{equation}
\label{ansatz}\psi(t,x)=\frac{1}{\sqrt{b(t)}}\exp{\left[
i\frac{mx^{2}}{2\hbar}\frac{\dot{b}(t)}{b(t)}\right]  }\phi(\tau,\chi),\nonumber
\end{equation}
where $\chi=x/b(t)$, $\tau=\tau(t)$ is a time rescaling, and the distance scale $b(t)$ satisfies (\ref{Ermakov})
and the accompanying boundary conditions. We obtain
\begin{equation}
\label{intermediate}i\hbar\frac{\partial\phi}{\partial\tau}\left(  \frac
{d\tau}{dt}b^2\right)  =\left[  -\frac{\hbar^{2}}{2m}\frac{\partial^{2}%
}{\partial\chi^{2}}+\frac{m(\ddot{b}+\omega^{2}b)b^{3}}{2}\chi^{2}+bg|\phi|^2\right]
\phi.
\end{equation}
If we choose the time scale $\tau(t)$ such that
\begin{equation}
\label{time_rescaling}\tau(t)=\int_{0}^{t}\frac{dt^{\prime}}{b(t^{\prime
})},
\end{equation}
then (\ref{intermediate}) becomes
\begin{equation}
\label{scaled}
i\hbar\frac{\partial\phi}{\partial\tau}b=\left(  -\frac{\hbar^{2}}{2m}%
\frac{\partial^{2}}{\partial\chi^{2}}+b\frac{m\omega_{0}^{2}}{2}\chi
^{2}+bg|\phi|^2\right)  \phi
\end{equation}
with the initial condition $\phi(0,\chi)=\Psi_{\omega_{0}}(\chi)$. In this equation the frequency is constant and we can apply the Thomas-Fermi approximation neglecting the kinetic energy term, provided that the scaling factor $b$ does not become very small (we will secure this by imposing appropriate conditions later),
\begin{equation}
i\hbar\frac{\partial\phi}{\partial\tau}=\left(\frac{m\omega_{0}^{2}}{2}\chi
^{2}+g|\phi|^2\right)  \phi.\nonumber
\end{equation}
So
$\phi(\tau,\chi)=e^{-i\mu_{\omega_{0}}\tau/\hbar}\Psi_{\omega_{0}}
(\chi)$ and
\begin{align}
\label{almost}\psi(t,x)&=\exp{\left[  i\frac{mx^{2}}{2\hbar}\frac
{\dot{b}(t)}{b(t)}\right]  }\times \exp\left[-i\frac{\mu_{\omega_{0}}\tau(t)}{\hbar}\right]\nonumber\\
& \times\frac{1}{\sqrt{b(t)}}\Psi_{\omega_{0}}(\frac{x}{b(t)})
\end{align}
We will show that for $t\geq T$, where $b(t)=(\omega_{0}/\omega_{T})^{2/3}$,
$\psi(t,x)$ has the desired form. We examine separately each of the three
terms in (\ref{almost}). Since $\dot{b}(t)=0$ in this time interval, the first
exponential is equal to unity. About the second exponential, observe from
(\ref{time_rescaling}) that
\begin{equation}
\tau(t)=\tau(T)+\left(\frac{\omega_{T}}{\omega_{0}}\right)^{2/3}(t-T),\nonumber
\end{equation}
since $b(t)=(\omega_{0}/\omega_{T})^{2/3}$, $t\geq T$. Also, from (\ref{energy})
we have $\mu_{\omega_{0}}/\mu_{\omega_{T}}=(\omega_{0}/\omega_{T})^{2/3}$. Thus
\begin{equation}
e^{-i\mu_{\omega_{0}}\tau(t)/\hbar}=e^{-i\mu_{\omega_{0}}\tau(T%
)/\hbar}e^{-i\mu_{\omega_{T}}(t-T)/\hbar}\nonumber
\end{equation}
The last term in (\ref{almost}) satisfies
\begin{equation}
\left(  \frac{\omega_{T}}{\omega_{0}}\right)  ^{1/3}\Psi_{\omega_{0}}\left[\left(\frac{\omega_{T}}{\omega_{0}}\right)^{2/3}x\right]=\Psi_{\omega_{T}}(x),\nonumber
\end{equation}
as it can be verified using (\ref{eigenstate}) and (\ref{energy}). Putting all these together we
see that $\psi(t,x)$ has the desired form for $t\geq T$.
\end{proof}

In order to find the path $\omega(t)$, $0\leq t\leq T$, that accomplishes
frictionless expansion in minimum time $T$, we express the problem using the
language of optimal control, incorporating possible restrictions on
$\omega(t)$ and $b(t)$ due, for example, to experimental limitations. If we set
\begin{equation}
x_{1}=b,\,x_{2}=\frac{\dot{b}}{\omega_{0}},\,u(t)=\frac{\omega^{2}(t)}%
{\omega_{0}^{2}},
\end{equation}
and rescale time according to $t_{\mbox{new}}=\omega_{0} t_{\mbox{old}}$, we
obtain the following system of first order differential equations, equivalent
to equation (\ref{Ermakov})
\begin{align}
\label{system1}\dot{x}_{1}  &  = x_{2},\\
\label{system2}
\dot{x}_{2}  &  = -ux_{1}+\frac{1}{x_{1}^{2}}.
\end{align}
If we set $\gamma=(\omega_{0}/\omega_{T})^{2/3}>1$, the time optimal problem
takes the following form

\newtheorem{problem}{problem} \begin{problem}\label{problem}
Find $-1\leq u(t) \leq 1$ with $u(0)=1, u(T)=1/\gamma^3$ such that starting from $(x_1(0),x_2(0))=(1,0)$, the above system reaches the final point $(x_1(T),x_2(T))=(\gamma,0), \gamma>1$ in minimum time $T$, under the constraint $x_2\geq0$.
\end{problem}

The boundary conditions on the state variables $(x_{1},x_{2})$ are equivalent
to those for $b,\dot{b}$, while the boundary conditions on the control
variable $u$ are equivalent to those for $\omega$, so the requirements of Proposition \ref{prop:fr_cooling} are satisfied. Note that
the possibility $\omega^{2}(t)<0$ (expulsive parabolic potential) for some
time intervals is permitted \cite{Muga09,Chen10}. The maximum allowed frequency is taken to be equal with the initial frequency $\omega_0^2$ \cite{Hoffmann_EPL11}. The lower bound is taken as $-\omega_0^2$ since the same harmonic potential can be made repelling or attractive by a phase shift when it is created by an optical lattice \cite{Hoffmann_EPL11}. The assumption $x_2\geq0$ implies that $\dot{b}\geq 0$, so the scale factor is an increasing function of time satisfying $b(t)\geq1$. This assumption guarantees that the condensate is continuously expanding and excludes breather-like solutions which might be difficult to implement experimentally. Note that the aforementioned condition holds for both time profiles of $b(t)$ used in \cite{Muga09}. Specifically, for the polynomial form $b_p(s)=(\gamma-1)(6s^5-15s^4+10s^3)+1$, where $s=t/T$, it is $\dot{b}_p(s)=30(\gamma-1)s^2(s-1)^2\geq 0$, while for the exponential of polynomial form $b_e(s)=\gamma^{\,6s^5-15s^4+10s^3}$ it is $\dot{b}_e(s)=30\ln{\gamma} \,s^2(s-1)^2b_e(s)\geq 0$. Now observe that the system (\ref{system1}) and (\ref{system2}) describes the one-dimensional Newtonian motion of a unit-mass particle,
with position coordinate $x_{1}$ and velocity $x_{2}$. The acceleration
(force) acting on the particle is $-ux_{1}+1/x_{1}^{2}$. From this point of view, the breather-like solutions correspond to retrograde motion \cite{Rezek11}. The full benefits from this kind of motion arise when the particle moves close to the strong repulsive potential at $x_1=0$ \cite{Stefanatos10}. But when $b\rightarrow 0$ the Thomas-Fermi approximation in (\ref{scaled}) becomes questionable. For all these reasons we will not consider breather-like solutions here and we defer the study of retrograde motion to a future publication.


In the next section we solve the following optimal control problem

\begin{problem}\label{problem1}
Find $-1\leq u(t)\leq 1$ such that starting from $(x_1(0),x_2(0))=(1,0)$, the system above reaches the final point $(x_1(T),x_2(T))=(\gamma,0), \gamma>1$ in minimum time $T$, under the constraint $x_2\geq0$.
\end{problem}

In both problems the class of admissible controls formally are Lebesgue measurable functions that take values in the control set $[-1,1]$ almost everywhere. However, as we shall see, optimal controls are piecewise continuous, in fact bang-bang. The optimal control found for problem \ref{problem1} is also optimal for problem \ref{problem}, with the addition of instantaneous jumps at the initial and final points, so that the boundary conditions $u(0)=1$ and $u(T)=1/\gamma^3$ are satisfied. Note that in connection with Fig. \ref{fig:frequency}, a natural way to think about these conditions is that $u(t)=1$ for $t\leq 0$ and $u(t)=1/\gamma^3$ for $t\geq T$; in the interval $(0,T)$ we pick the control that achieves the desired transfer in minimum time. This approach is similar to that used in our recently published work \cite{Stefanatos11, Chen11}, as well as in \cite{Hoffmann_EPL11, Salamon09}.


\section{OPTIMAL SOLUTION}

The system described by (\ref{system1}), (\ref{system2}) can be expressed
in compact form as
\begin{equation}
\dot{x}=f(x)+ug(x), \label{affine}
\end{equation}
where the vector fields are given by
\begin{equation}
f=\left(
\begin{array}
[c]{c}
x_{2}\\
1/x_{1}^{2}
\end{array}
\right)  ,\,\,g=\left(
\begin{array}
[c]{c}
0\\
-x_{1}
\end{array}
\right),
\end{equation}
$x\in D=\{(x_{1},x_{2})\in\mathbb{R}^{2}:x_{1}>0,x_2\geq0\}$ and $u\in
U=[-1,1]$. Admissible controls are Lebesgue measurable functions that
take values in the control set $U$. While the bound in $x_2$ is imposed to exclude retrograde motion, the bound in $x_1$ results form the initial condition and the system dynamics. Specifically, observe that starting with any positive initial condition $x_{1}(0)>0$, and
using any admissible control $u$, as $x_{1}\rightarrow0^{+}$, the
\textquotedblleft repulsive force" $1/x_{1}^{2}$ leads to an increase in
$x_{1}$ that will keep $x_{1}$ positive (as long as the solutions exist). Given an admissible control $u$ defined
over an interval $[0,T]$, the solution $x$ of the system (\ref{affine})
corresponding to the control $u$ is called the corresponding trajectory and we
call the pair $(x,u)$ a controlled trajectory.

For a constant $\lambda_{0}$ and a row vector $\lambda=(\lambda_{1}%
,\lambda_{2})\in\left(  \mathbb{R}^{2}\right)  ^{\ast}$ define the control
Hamiltonian as%
\[
H=H(\lambda_{0},\lambda,x,u)=\lambda_{0}+\langle\lambda,f(x)+ug(x)\rangle.
\]
Then, Pontryagin's Maximum Principle \cite{Pontryagin}
provides the following necessary conditions for optimality:

\begin{theorem}
\label{prop:max_principle}
Let $(x_{\ast}(t),u_{\ast}(t))$
be a time-optimal controlled trajectory that transfers the initial condition
$x(0)=x_{0}$ into the terminal state $x(T)=x_{T}$. Then it is a necessary
condition for optimality that there exists a constant $\lambda_{0}\leq0$ and
nonzero, absolutely continuous row vector function $\lambda(t)$ such that:

\begin{enumerate}
\item $\lambda$ satisfies the so-called adjoint equation%
\begin{align}
\dot{\lambda}(t) &=-\frac{\partial H}{\partial x}(\lambda_{0},\lambda
(t),x_{\ast}(t),u_{\ast}(t)) \nonumber\\
&=-\left\langle \lambda(t),Df(x_{\ast}(t))+u_{\ast
}(t)Dg(x_{\ast}(t))\right\rangle
\end{align}

\item For $0\leq t\leq T$ the function $u\mapsto H(\lambda_{0}%
,\lambda(t),x_{\ast}(t),u)$ attains its maximum\ over the control set $U$ at
$u=u_{\ast}(t)$.

\item $H(\lambda_{0},\lambda(t),x_{\ast}(t),u_{\ast}(t))\equiv0$.
\end{enumerate}
\end{theorem}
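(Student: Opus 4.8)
The statement is the classical Pontryagin Maximum Principle for a free-terminal-time problem, so for the full argument we rely on \cite{Pontryagin}; here I sketch the derivation via needle variations, which also makes clear that its hypotheses apply in our setting. The system \eqref{affine} is control-affine with $f$ and $g$ smooth on the domain $D$, and, as noted above, the repulsive term $1/x_1^2$ keeps $x_1>0$, so every admissible trajectory of interest stays in the open set where the classical theory applies. First I would fix a candidate time-optimal pair $(x_\ast,u_\ast)$ on $[0,T]$ and introduce needle (Pontryagin) variations: at a Lebesgue point $\tau\in(0,T)$ of $u_\ast$ and for a value $v\in U$, replace $u_\ast$ by $v$ on $[\tau-\varepsilon,\tau]$. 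To first order in $\varepsilon$ this displaces $x(\tau)$ by $\varepsilon\,(v-u_\ast(\tau))\,g(x_\ast(\tau))$, and transporting this along the variational equation $\dot y=\bigl(Df(x_\ast)+u_\ast Dg(x_\ast)\bigr)y$ produces a terminal displacement $\varepsilon\,(v-u_\ast(\tau))\,\Phi(T,\tau)\,g(x_\ast(\tau))$, where $\Phi(\cdot,\cdot)$ is the fundamental matrix of that equation. Variations of the terminal time contribute the additional directions $\pm\dot x_\ast(T)=\pm\bigl(f(x_\ast(T))+u_\ast(T)\,g(x_\ast(T))\bigr)$.

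Next I would form the closed convex cone $K\subset\mathbb{R}^2$ generated by all these terminal perturbation vectors. The key reduction --- and the step I expect to be the main obstacle --- is to show that time-optimality forces $K$ to be a proper cone: if $K$ contained a neighbourhood of the origin then, by the standard Brouwer fixed-point argument applied to a simplex of several simultaneously inserted needles, one could actually reach $x(T)=\gamma$ (and nearby states) in strictly less time, contradicting minimality of $T$. Granting properness, $K$ lies in a closed half-space, so there is a nonzero row covector $\ell$ with $\langle\ell,w\rangle\le0$ for all $w\in K$; inspecting the time-variation directions fixes the sign so that the associated constant $\lambda_0$ satisfies $\lambda_0\le0$. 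Define $\lambda(t)$ as the solution of the adjoint equation (item~1) with $\lambda(T)=\ell$; then $\lambda(t)=\ell\,\Phi(T,t)$, which is automatically absolutely continuous and nowhere zero.

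It then remains to read off items~2 and~3. Applying $\langle\ell,\cdot\rangle\le0$ to the needle direction at $\tau$ and using $\lambda(T)\,\Phi(T,\tau)=\lambda(\tau)$ yields $(v-u_\ast(\tau))\,\langle\lambda(\tau),g(x_\ast(\tau))\rangle\le0$ for every $v\in U$, which is precisely the assertion that $u\mapsto H(\lambda_0,\lambda(\tau),x_\ast(\tau),u)=\lambda_0+\langle\lambda,f\rangle+u\,\langle\lambda,g\rangle$ attains its maximum over $U$ at $u=u_\ast(\tau)$; since $H$ is affine in $u$, this is the sign-of-the-switching-function characterization that will later yield bang-bang structure. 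For item~3, because $f$ and $g$ are time-independent, differentiating $H(\lambda_0,\lambda(t),x_\ast(t),u_\ast(t))$ and substituting the state and adjoint equations (the contribution from varying $u_\ast$ drops out by the maximization property) shows the derivative vanishes, so $H$ is constant along the trajectory; the freedom in the terminal time then pins that constant to $0$. One subtlety to dispatch along the way is the state constraint $x_2\ge0$: the version above is the interior Maximum Principle, so I would argue that an optimal trajectory meets the boundary $x_2=0$ only at the endpoints, where the state is anyway prescribed, and apply the theorem on the interior $\{x_2>0\}$ (alternatively, invoke the state-constrained principle and observe that the boundary multiplier is inactive on $(0,T)$).
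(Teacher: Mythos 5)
The paper does not prove this statement: it is the classical Pontryagin Maximum Principle for the free-terminal-time problem, stated verbatim and attributed to \cite{Pontryagin}, so there is no in-paper argument to compare against. Your needle-variation sketch is the standard textbook route and correctly isolates the genuinely hard step (properness of the terminal variational cone via the Brouwer/simplex argument for simultaneously inserted needles); the derivations of the maximum condition from $\langle\ell,\cdot\rangle\le 0$ and of the constancy of $H$ from time-independence of $f,g$ are fine. Your closing remark about the state constraint $x_2\ge 0$ is also apt: the theorem as stated is the unconstrained PMP, and the paper indeed deals with that constraint outside this theorem, informally, in the proof of the bang-bang proposition.

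There is, however, one step that fails as literally written. You build the cone $K$ in the \emph{state} space and include in it both temporal directions $\pm\dot{x}_{\ast}(T)$; separation then forces $\langle\ell,\dot{x}_{\ast}(T)\rangle=0$. Since $H(T)=\lambda_{0}+\langle\lambda(T),f(x_\ast(T))+u_\ast(T)g(x_\ast(T))\rangle=\lambda_{0}+\langle\ell,\dot{x}_{\ast}(T)\rangle$, combining this with item~3 ($H\equiv 0$) yields $\lambda_{0}=0$: every extremal would be abnormal, which contradicts the paper's subsequent (and essential) restriction to normal extremals with $\lambda_0=-1$. The classical fix is to carry out the separation in the \emph{augmented} cost--state space: adjoin $x^{0}=t$ with $\dot{x}^{0}=1$, note that needle variations perturb $x^{0}(T)$ by zero while temporal variations perturb it by $\pm\delta$, and separate the augmented cone from the ray in the direction $(-1,0)$ (strictly smaller cost, same endpoint). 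This simultaneously delivers $\lambda_{0}\le 0$ from the cost direction and $\lambda_{0}+\langle\ell,\dot{x}_{\ast}(T)\rangle=0$, i.e.\ $H(T)=0$, from the temporal directions, with no contradiction. You should also note that the augmented separation only gives $(\lambda_{0},\ell)\neq 0$; the statement's claim that $\lambda(t)$ itself is nonzero then follows because $\ell=0$ together with $H\equiv0$ would force $\lambda_{0}=0$ as well.
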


We call a controlled trajectory $(x,u)$ for which there exist multipliers
$\lambda_{0}$ and $\lambda(t)$ such that these conditions are satisfied an
extremal. Extremals for which $\lambda_{0}=0$ are called abnormal. If
$\lambda_{0}<0$, then without loss of generality we may rescale the $\lambda
$'s and set $\lambda_{0}=-1$. Such an extremal is called normal.

\begin{definition}
We denote the vector fields corresponding to the constant controls
$u=-1$ and $u=+1$ by $X=f-g$ and $Y=f+g$, respectively, and call
the corresponding trajectories the \emph{bang} $X$- and $Y$-trajectories. A \emph{bang-bang} trajectory is a
finite concatenation of $X$- and $Y$-trajectories. A concatenation of an
$X$-trajectory followed by a $Y$-trajectory is denoted by $XY$ while the
concatenation in the inverse order is denoted by $YX$.
\end{definition}

\begin{proposition}
For Problem \ref{problem1} the optimal trajectory has the bang-bang form $XY$.
\end{proposition}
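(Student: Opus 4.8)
The plan is to apply the Pontryagin Maximum Principle (Theorem~\ref{prop:max_principle}) to the control-affine system \eqref{affine} and extract the structure of extremal controls from the sign of the switching function. First I would introduce the switching function $\Phi(t)=\langle\lambda(t),g(x_\ast(t))\rangle=-\lambda_2(t)x_1(t)$; by the maximization condition (2) of the Maximum Principle, $u_\ast(t)=+1$ when $\Phi(t)>0$ and $u_\ast(t)=-1$ when $\Phi(t)<0$, so the whole argument reduces to controlling the zeros of $\Phi$. Since $x_1>0$ throughout $D$, the zeros of $\Phi$ coincide with the zeros of $\lambda_2$. I would then differentiate $\Phi$ along extremals, using the adjoint equation to compute $\dot\Phi$ in terms of the Lie bracket $[f,g]$, and $\ddot\Phi$ in terms of $[f,[f,g]]$ and $[g,[f,g]]$. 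A direct computation gives $[f,g]=\bigl(x_1,\,x_2\bigr)^{\!\top}$ (up to sign conventions), which is nonzero and, crucially, not parallel to $g=(0,-x_1)^\top$ on $D$; this transversality rules out singular arcs, so every extremal is bang-bang and it remains only to bound the number of switchings.

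Next I would show there is at most one switching, and that it must be from $X$ to $Y$. The key is a Taylor-type / sign analysis of $\Phi$ at a switching time $t_s$: at a zero of $\Phi$ we have $\dot\Phi(t_s)=\langle\lambda(t_s),[f,g](x_\ast(t_s))\rangle$, and because $g$ and $[f,g]$ are linearly independent on $D$ while $\lambda(t_s)\neq 0$ with $\Phi(t_s)=\langle\lambda,g\rangle=0$, we get $\dot\Phi(t_s)\neq 0$ — so every switching is simple and switchings are isolated. To get a \emph{global} bound I would bring in the constraint $x_2\ge 0$ and the Newtonian-mechanics picture already set up in the text: the ``particle'' starts at rest at $x_1=1$, must arrive at rest at $x_1=\gamma>1$, and moves monotonically to the right ($x_2\ge0$). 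On a $Y$-arc ($u=+1$) the force is $-x_1+1/x_1^2$, which is negative for $x_1>1$ (decelerating), while on an $X$-arc ($u=-1$) the force is $x_1+1/x_1^2>0$ (accelerating). So to go from rest to rest with net positive displacement one needs a period of acceleration followed by a period of deceleration, i.e. an $X$-arc followed by a $Y$-arc; an extremal starting or ending with the ``wrong'' bang would either immediately violate $x_2\ge0$, or fail to decelerate back to $x_2=0$, or arrive with $x_2\neq 0$. Combining this with the isolated-switchings fact yields exactly one switch, of type $XY$.

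The remaining care is the boundary behavior and the state constraint $x_2\ge 0$. Because $x_2\ge0$ is active precisely at the endpoints (where $x_2=0$) and, a priori, possibly on an arc, I would either invoke the maximum principle with state constraints or — more cheaply — observe that on the $XY$ trajectory the constraint is active only at the two endpoints and is otherwise strictly satisfied, so the unconstrained necessary conditions apply on the open interval and the constrained problem has the same solution; one must check that an $X$-arc from $(1,0)$ indeed has $x_2$ increasing (it does, since the force $1+1/x_1^2$ is strictly positive there) so the trajectory immediately enters the interior. I would also note that an abnormal extremal ($\lambda_0=0$) would force $H\equiv 0$ to read $\langle\lambda,f+ug\rangle\equiv 0$, which together with the nondegeneracy of $\lambda$ and the independence of $g,[f,g]$ again collapses to the bang-bang analysis, so normality is not needed for the structural conclusion.

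The main obstacle I expect is the \emph{global} switching bound rather than the local (no-singular-arcs, isolated-switchings) facts, which are routine Lie-bracket computations. Proving ``at most one switch'' rigorously means ruling out $YX$, $XYX$, $YXY$, etc.; the clean way is to exploit the explicit structure of the planar dynamics — e.g. tracking the evolution of $\lambda_2$ (hence $\Phi$) together with $x_2$ and using that between switches the trajectory is a solution of an explicitly integrable (energy-conserving) one-dimensional motion — and to show that the phase-plane geometry of the $X$- and $Y$-flows, restricted to $\{x_1>0,\ x_2\ge0\}$, simply does not admit a rest-to-rest connection from $x_1=1$ to $x_1=\gamma$ with more than one switch. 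This is where I would spend the bulk of the effort.
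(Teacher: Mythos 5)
Your setup is sound and matches the paper's: the PMP with switching function $\Phi$ proportional to $-\lambda_2$, the elimination of singular arcs (your Lie-bracket computation $[f,g]=(x_1,-x_2)^\top$, which is nowhere parallel to $g$ on $D$, is equivalent to the paper's direct observation that $\Phi=0$ and $\dot\Phi=-\dot\lambda_2=\lambda_1=0$ simultaneously would annihilate $\lambda$), and the identification that the trajectory must begin with an $X$-arc (at $(1,0)$ the $Y$-field vanishes) and end with a $Y$-arc. However, there is a genuine gap exactly where you flag it: the global bound of \emph{one} switching. Your mechanical acceleration/deceleration argument only constrains the first and last arcs, and the route you propose for closing the gap --- showing that the phase-plane geometry ``does not admit a rest-to-rest connection from $x_1=1$ to $x_1=\gamma$ with more than one switch'' --- starts from a false premise. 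Such multi-switch feasible trajectories do exist (the paper draws an $XYXY$ one in Fig.~\ref{fig:comparison} and later compares its duration to the one-switch trajectory via Stokes' theorem); what must be shown is that they are not \emph{extremals}, not that they are not \emph{trajectories}.

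The missing ingredient is condition (3) of the Maximum Principle, $H\equiv 0$, evaluated at switching times. At any switch $t'$ one has $\lambda_2(t')=0$, so $H=0$ reduces to $\lambda_1(t')x_2(t')=-\lambda_0$. At the first switch (from $X$ to $Y$) the switching function passes from negative to positive, so $\dot\Phi(t')=\lambda_1(t')>0$, and $x_2(t')>0$ on the interior of the $X$-arc; hence $\lambda_0<0$ and one may normalize $\lambda_0=-1$. But then \emph{every} subsequent switching time must also satisfy $\lambda_1(t')x_2(t')=1$ with $x_2(t')\ge 0$, forcing $\dot\Phi(t')=\lambda_1(t')>0$ at every zero of $\Phi$. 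A continuous function whose derivative is strictly positive at each of its zeros has at most one zero, so a $Y\!\to\!X$ switch (which would require $\dot\Phi<0$) is impossible and the extremal is exactly $XY$. Without this use of $H\equiv0$ (or some substitute such as the $\int_R x_2^{-2}\,dx_1\,dx_2>0$ comparison the paper gives separately), your argument does not rule out $XYXY$ and longer concatenations, so the proof as proposed is incomplete.
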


\begin{proof}
For the system (\ref{system1}), (\ref{system2}) we have
\begin{equation}
H(\lambda_{0},\lambda,x,u)=\lambda_{0}+\lambda_{1}x_{2}+\lambda_{2}\left(  \frac{1}%
{x_{1}^{2}}-x_{1}u\right)  ,\label{hamiltonian}%
\end{equation}
and thus
\begin{equation}
\dot{\lambda}=\lambda\left(
\begin{array}
[c]{cc}%
0 & -1\\
u+2/x_{1}^{3} & 0
\end{array}
\right) \label{adjoint}%
\end{equation}

Observe that $H$ is a linear function of the bounded control variable $u$. The
coefficient at $u$ in $H$ is $-\lambda_{2}x_{1}$ and, since $x_{1}>0$, its
sign is determined by $\Phi=-\lambda_{2}$, the so-called \emph{switching
function}. According to the maximum principle, point 2 above, the optimal
control is given by $u=-1$ if $\Phi<0$ and by $u=1$ if $\Phi>0$. The
maximum principle provides a priori no information about the control at times
$t$ when the switching function $\Phi$ vanishes. However, if $\Phi(t)=0$ and
$\dot{\Phi}(t)\neq0$, then at time $t$ the control switches between its
boundary values and we call this a bang-bang switch. If $\Phi$ were to vanish
identically over some open time interval $I$ the corresponding control is
called \emph{singular}. Now observe that, whenever the switching function $\Phi(t)=-\lambda_{2}(t)$ vanishes at some
time $t$, then it follows from the non-triviality of the multiplier
$\lambda(t)$ that its derivative $\dot{\Phi}(t)=-\dot{\lambda}_{2}%
(t)=\lambda_{1}(t)$ is non-zero. Hence the switching function changes sign and
there is a bang-bang switch at time $t$. Thus optimal controls alternate between the boundary values $u=-1$ and
$u=1$ of the control set.

We next show how the restriction $x_2\geq 0$ limits the number and the type of switchings.
For $u=1$ the initial point $(1,0)$ is an equilibrium point for system (\ref{system1}), (\ref{system2}) so we should start with $u=-1$, moving the system along an $X$-trajectory. Observe from (\ref{system2}) that for $u=-1$ it is $\dot{x}_{2}>0$ so $x_2>0$, and a switching to a $Y$-trajectory at a point with $x_{2}(t)>0$ is necessary in order to reach the target point $(\gamma,0)$. Then, since $\lambda_{2}(t)=0$ at the switching point, it follows from $H=0$ that $\lambda
_{1}(t)x_{2}(t)=-\lambda_{0}$. Since $\lambda_{1}(t)=\dot{\Phi}(t)>0$, it is $\lambda_{0}<0$. We henceforth only
consider normal trajectories and set $\lambda_{0}=-1$.
Then, $H=0$ implies that for any switching time $t'$ we must have $\lambda
_{1}(t')x_{2}(t')=1$. Since $x_{2}(t')\geq 0$ it is $\dot{\Phi}
(t')=\lambda_{1}(t')>0$ and obviously the initial switching from $X$ to $Y$ is unique.
\end{proof}

\begin{figure}[t]
\centering
\includegraphics[width=0.8\linewidth]{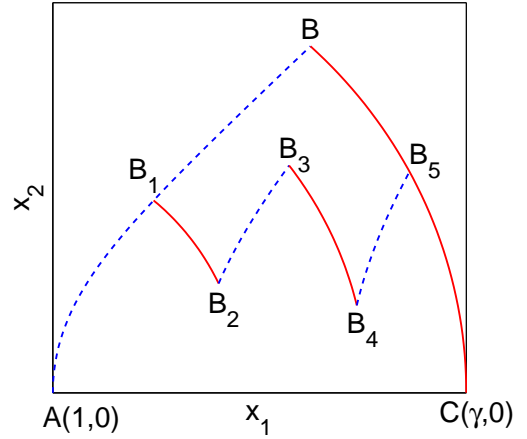}\caption{One- and multi-switchings trajectories. Solid line corresponds to $u=1$, dashed line to $u=-1$.}
\label{fig:comparison}
\end{figure}

An intuitive way to understand why the one-switching trajectory is the fastest is to compare it with a trajectory with more switchings, like that shown in Fig. \ref{fig:comparison}, and use the particle model mentioned above. In both trajectories the particle traverses the same distance $x_1$ but along the trajectory with one switching its speed $\dot{x}_1=x_2$ is always higher.

There is an elegant way to show the time-optimality of the one-switching trajectory, at least when compared to trajectories with multiple switchings at points with $x_2>0$, using the one-form $\Omega$ introduced in \cite{Su1} and defined by  $\Omega(f)=1$ and $\Omega(g)=0$. Note that for $x_2\neq 0$ the vectors $f,g$ are linearly independent in $D$ so the one-form can be defined in this two-dimensional manifold by its action on these fields. Now consider the parts where the two trajectories shown in Fig. \ref{fig:comparison} differ and denote them by $c$ (part $B_1BB_5$) and $c'$ (part $B_1B_2B_3B_4B_5$).
The infinitesimal displacement along each trajectory is $dx=(f+ug)dt$, so $dt=\Omega(dx)$. The necessary time to travel $c$ is $t=\int dt=\int_c \Omega$ and the corresponding time for $c'$ is $t'=\int dt=\int_{c'} \Omega$. So $t'-t=\int_{c'} \Omega-\int_c \Omega=\oint_{c^{-1}*c'} \Omega$, where $c^{-1}$ is $c$ run backwards. Since $c^{-1}*c'$ is oriented counterclockwise, by Stokes theorem we have $t'-t=\int_R d\Omega$, where $R$ is the region enclosed by $c^{-1}*c'$. From the definition of $\Omega$ we can find its expression in cartesian coordinates $\Omega=(1/x_2)dx_1$ and from this the exterior derivative $d\Omega=(1/x_2^2)dx_1\wedge dx_2$ \cite{Schutz80}. Thus
\begin{equation}
t'-t=\int_R d\Omega=\int_R\frac{1}{x_2^2}dx_1dx_2>0\nonumber
\end{equation}
and obviously the trajectory with only one switching is faster.

We now move to calculate the minimum time $T$ necessary to reach the final point $(\gamma,0)$. A first integral of the motion along the $X$-trajectory starting from $(1,0)$ is
\begin{equation}
x_{2}^{2}-x_{1}^{2}+\frac{2}{x_{1}}=1,\label{first_integral_1}
\end{equation}
while a first integral of the motion along the $Y$-trajectory ending in $(\gamma,0)$ is
\begin{equation}
x_{2}^{2}+x_{1}^{2}+\frac{2}{x_{1}}=\gamma^2+\frac{2}{\gamma},\label{first_integral_2}
\end{equation}
The total time $T$ is given by the sum of the times spent on each trajectory segment
\begin{equation}
\label{total_time}
T=T_1+T_2.
\end{equation}
Since $\dot{x}_1=x_2>0$ for both cases, we can easily find from (\ref{first_integral_1})
\begin{equation}
\label{integral_1}
T_1=\int_1^\beta\sqrt{\frac{x_1}{(x_1-1)(x_1^2+x_1+2)}}\,dx_1
\end{equation}
and from (\ref{first_integral_2})
\begin{equation}
\label{integral_2}
T_2=\int_\beta^\gamma\sqrt{\frac{x_1}{(\gamma-x_1)(x_1^2+\gamma x_1-2/\gamma)}}\,dx_1,
\end{equation}
where $B(\beta,\delta)$ is the common point of the $X$ and $Y$ segments shown in Fig. \ref{fig:comparison}, satisfying (\ref{first_integral_1}) and (\ref{first_integral_2}). It is
\begin{equation}
\label{beta}
\beta=\sqrt{\frac{\gamma^3-\gamma+2}{2\gamma}}.
\end{equation}
We express the times $T_1, T_2$ in terms of elliptic integrals and elementary functions.
We start from (\ref{integral_1}) by rewriting the integrand in the convenient form (note that $x_1\neq 0$)
\begin{equation}
\label{integral_1_convenient}
T_1=\int_1^\beta\frac{x_1}{\sqrt{(x_1^2-x_1)(x_1^2+x_1+2)}}\,dx_1.
\end{equation}
Following the procedure described in \cite{Abramowitz65}, we can bring the above integral in the form
\begin{equation}
\label{integral_1_transformed}
T_1=\frac{1}{\sqrt{B_1B_2}(b-a)}\int_q^y\frac{b-ay}{(1-y)\sqrt{(y^2+p^2)(y^2-q^2)}}\,dy,
\end{equation}
where
\begin{equation}
\label{transformation_y}
y=\frac{x_1-b}{x_1-a}
\end{equation}
and
\begin{equation}
a=-\frac{\sqrt{2}-1}{3-2\sqrt{2}},\quad b=\frac{1+\sqrt{2}}{3+2\sqrt{2}},
\end{equation}
\begin{equation}
B_1=\frac{1+2\sqrt{2}}{4\sqrt{2}},\quad B_2=\frac{3+2\sqrt{2}}{4\sqrt{2}},
\end{equation}
\begin{equation}
p^2=\frac{2\sqrt{2}-1}{2\sqrt{2}+1},\quad q^2=\frac{3-2\sqrt{2}}{3+2\sqrt{2}}.
\end{equation}
The integral in (\ref{integral_1_transformed}) can be decomposed as
\begin{equation}
T_1=\frac{1}{\sqrt{B_1B_2}}\left(\frac{I_1}{b-a}+I_2\right),
\end{equation}
where
\begin{equation}
\label{elliptic_part}
I_1=\int_q^y\frac{b-ay^2}{(1-y^2)\sqrt{(y^2+p^2)(y^2-q^2)}}\,dy
\end{equation}
and
\begin{equation}
\label{elementary_part}
I_2=\int_q^y\frac{y}{(1-y^2)\sqrt{(y^2+p^2)(y^2-q^2)}}\,dy.
\end{equation}
The substitution
\begin{equation}
\label{transformation_z}
z=\sqrt{1-\frac{q^2}{y^2}}
\end{equation}
allows us to express $I_1$ in terms of elliptic integrals
\begin{equation}
\label{I_1}
I_1=\frac{1}{\sqrt{p^2+q^2}}\left[bF(z,m)+\frac{(b-a)q^2}{1-q^2}\Pi(n,z,m)\right],
\end{equation}
where
\begin{equation}
\label{Elliptic_1}
F(z,m)=\int_0^z\frac{dx}{\sqrt{(1-x^2)(1-mx^2)}}
\end{equation}
and
\begin{equation}
\label{Elliptic_3}
\Pi(n,z,m)=\int_0^z\frac{dx}{(1-ny^2)\sqrt{(1-x^2)(1-mx^2)}}
\end{equation}
are the incomplete elliptic integrals of the first and third kind, respectively, in Jacobi's form, while
\begin{equation}
m=\frac{p^2}{p^2+q^2},\quad n=\frac{1}{1-q^2}.
\end{equation}
On the other hand, the substitution
\begin{equation}
\label{transformation_w}
w=\frac{\frac{2c}{1-y^2}-d}{p^2+q^2},
\end{equation}
where
\begin{equation}
c=(1+p^2)(1-q^2),\quad d=2+p^2-q^2,
\end{equation}
gives
\begin{equation}
I_2=\frac{1}{2\sqrt{c}}\int_1^w\frac{dw}{\sqrt{w^2-1}}=\frac{1}{2\sqrt{c}}\ln|w+\sqrt{w^2-1}|
\end{equation}
Using all the above, the time $T_1$ spent on the $X$ segment of the trajectory can be calculated as a function of $\gamma$. We move on to calculate $T_2$. Integral (\ref{integral_2}) can be written as
\begin{equation}
\label{integral_2_transformed}
T_2=\int_\beta^\gamma\sqrt{\frac{x_1}{(\gamma-x_1)(x_1+\epsilon)(x_1-\zeta)}}\,dx_1,
\end{equation}
where
\begin{equation}
\epsilon=\frac{\gamma+\sqrt{\gamma^2+8/\gamma}}{2},\quad \zeta=\frac{-\gamma+\sqrt{\gamma^2+8/\gamma}}{2}.
\end{equation}
Since $-\epsilon<0<\zeta<\gamma$ and $\zeta<\beta<\gamma$ (recall that $\gamma>1$), we find from \cite{Prudnikov86} that
\begin{equation}
\label{integral_2_elliptic}
T_2=\frac{2}{\sqrt{\gamma(\epsilon+\zeta)}}\left[(\gamma+\epsilon)\Pi(\nu,x,\mu)-\epsilon F(x,\mu)\right],
\end{equation}
where
\begin{equation}
\mu=\frac{\epsilon(\gamma-\zeta)}{\gamma(\epsilon+\zeta)},\quad \nu=-\frac{\gamma-\zeta}{\epsilon+\zeta},
\end{equation}
and
\begin{equation}
x=\sqrt{\frac{(\epsilon+\zeta)(\gamma-\beta)}{(\gamma-\zeta)(\beta+\epsilon)}}.
\end{equation}
Note that $\beta$ is given in (\ref{beta}) as a function of $\gamma$. In Fig. \ref{fig:time} we plot the total expansion time $T=T_1+T_2$ with respect to $\gamma$.

We finally show that for large $\gamma$ time $T$ grows logarithmically, which seems to be a universal characteristic for processes involving shortcuts to adiabaticity \cite{Stefanatos10,Chen10b,Hoffmann_EPL11}. Instead of taking the limit $\gamma\rightarrow\infty$ in the above expressions, we follow an easier path. Note that for large $x_1$ the system equations (\ref{system1}) and (\ref{system2}) take the simple form
\begin{align}
\label{limit_system1}\dot{x}_{1}  &  = x_{2},\\
\label{limit_system2}
\dot{x}_{2}  &  = -ux_{1}.
\end{align}
For $u=-1$ this system corresponds to a harmonic oscillator with angular frequency $\omega=1$. The switching point $B$ in Fig. \ref{fig:comparison} tends to $B\rightarrow(\gamma/\sqrt{2},\gamma/\sqrt{2})$, so it is close to the line through the origin which makes a $\pi/4$ angle with the $x_1$-axis. The necessary time to reach the final point $(\gamma,0)$ from $B$ approaches $T_2\rightarrow\pi/(4\omega)=\pi/4$. For $u=1$ we find from (\ref{limit_system1}) and (\ref{limit_system2}) that for large $x_1$ and $t$ it is $x_1\sim e^t$, so $T_1\sim\ln{\gamma}$. Thus, the dependence of total time for large $\gamma$ is $T\sim\ln{\gamma}$.

\begin{figure}[t]
\centering
\includegraphics[width=0.8\linewidth]{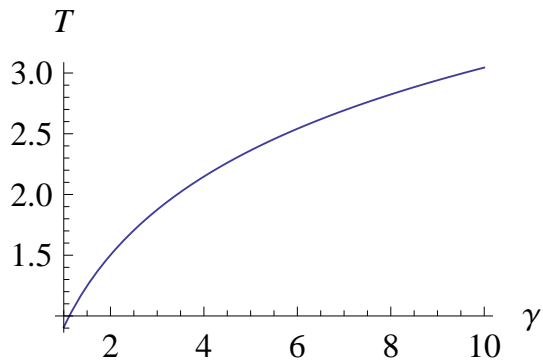}\caption{Total time for the adiabatic-like expansion as a function of the expansion factor.}
\label{fig:time}
\end{figure}

\section{CONCLUSION AND FUTURE WORK}

In this paper, we combined optimal control theory with shortcut to adiabaticity to derive the optimal time variation of a confining harmonic potential which results in a fast adiabatic-like expansion of a trapped one-dimensional BEC. This fast frictionless expansion is an important task for applications where controlling BEC in an efficient way is crucial, for example atom interferometry. In our future work we would like to relax some of the assumptions used in this article, for example to permit breather-like solutions for the condensate evolution.

\end{document}